\newtheorem{thm}{Theorem}
\newtheorem{lem}[thm]{Lemma}
\newtheorem{cor}[thm]{Corollary}
\theoremstyle{definition}
\newtheorem{conj}[thm]{Conjecture}
\theoremstyle{remark}
\newtheorem{rem}[thm]{Remark}
\numberwithin{equation}{section}
\def\S{{\mathcal S}}
\def\U{{\mathcal U}}
\def\V{{\mathcal V}}
\def\op{\oplus}
\def\ot{\otimes}
\def\R{\mathbb{R}}
\begin{document}

\title[A semigroup identity for tropical $3\times3$ matrices]{A semigroup identity for tropical $3\times3$ matrices}

\author{Yaroslav Shitov}
\address{National Research University Higher School of Economics, 20 Myasnitskaya Ulitsa, Moscow 101000, Russia}
\email{yaroslav-shitov@yandex.ru}

\subjclass[2000]{15A80, 20M05}
\keywords{Tropical algebra, matrix theory, semigroup identity}

\begin{abstract}
We construct a nontrivial semigroup identity satisfied by the tropical $3$-by-$3$ matrices.
\end{abstract}

\maketitle

\section{What is this about?}

The \textit{tropical semiring} is the set $\R$ of real numbers equipped with the tropical arithmetic,
that is, the operations $a\op b=\min\{a,b\}$ and $a\ot b=a+b$. The product of tropical matrices is defined
as the ordinary product over a field with $+$ and $\cdot$ replaced by the tropical operations $\op$ and $\ot$.
This paper is devoted to the study of tropical matrices from the point of view of semigroup theory, and we are
particularly interested in semigroup identities which tropical matrices satisfy. This problem has been considered
by Izhakian and Margolis~\cite{IzMar}, and it turns out to be interesting even for $2\times 2$ matrices.
Actually, Izhakian and Margolis have shown that the so called \textit{bicyclic} monoid admits a faithful
representation by tropical $2\times 2$ matrices, and they were able to give a shorter proof for the well-known
result by Adjan characterizing identities of bicyclic monoid~\cite{Adj}. No non-trivial semigroup identity has been known
to hold for matrices of larger order, and the question of existence of such identities has been left as an open problem,
see Section~4.3 of~\cite{IzMar}.
The further progress on this problem includes the papers~\cite{IzIden, Okni} providing different identities
that hold for \textit{triangular} tropical matrices and the paper~\cite{CHLS} that shows that the monoid
of $2\times2$ upper triangular tropical matrices is not finitely based. Despite a considerable amount
of attention in these and several other papers~\cite{CHLS, IzIden, IKR, IzMar, Okni}, the general version
of the problem remained open even in $3\times 3$ case, see Conjecture~6.2 in~\cite{IzIden}.

\begin{conj}\label{conj3x3ident}
The semigroup of tropical $3\times 3$ matrices satisfies a non-trivial semigroup identity.
\end{conj}

The contributions of this paper are as follows:

\noindent (1) we prove Conjecture~\ref{conj3x3ident} by constructing an
explicit non-trivial identity; 

\noindent (2) we construct identities satisfied by tropical
diagonally dominant $n\times n$ matrices, generalizing the result
proved in~\cite{IzIden, Okni} and giving a shorter proof for it.

\medskip


Our paper is structured as follows. We recall basic facts and relevant definitions
in Section~2, and we proceed in Section~3 with a deeper study of the notion of
\textit{sign-singularity} of tropical matrices. In Section~4, the concept of
a \textit{diagonally dominant} matrix is introduced, and a useful relation
between diagonal dominance and singularity is pointed out. Actually, the
main results of the first three sections are auxiliary;
the author cannot claim that they are original although he was not able
to find any particular matches in the literature. In Section~5, we prove
one of the main results by constructing a semigroup identity for diagonally
dominant matrices; this result is a generalization of the similar
result~\cite{IzIden, Okni} for triangular matrices. Our method seems to give
a much shorter proof of this result as Section~5 is actually self-contained
and does not rely on the results from previous sections. In Section~6, we
demonstrate our technique in use, and we prove Conjecture~\ref{conj3x3ident}. We analyze some recent progress on the topic in Section~7, and we point out the directions of future research in Section~8. We finalize the paper with the words of appreciation in Sections~9 and~10.

\section{Preliminaries}

We will denote the set of tropical $n$-by-$n$ matrices by $\R^{n\times n}$.
By $A_{ij}$ or $[A]_{ij}$ we denote the $(i,j)$th entry of a matrix $A$.
The tropical product of matrices $A$ and $B$ is denoted by $AB$,
and the $k$th tropical power of $A$ by $A^k$. Recall that by definition, the
$(i,j)$th entry of $A B$ equals $\min_{t=1}^n \{A_{it}+B_{tj}\}$ for any $i$
and $j$; the product operation is associative, so $\R^{n\times n}$ is a semigroup.
We say that $\R^{n\times n}$ satisfies a non-trivial \textit{semigroup identity}
if there are different words $\U(x,y)$ and $\V(x,y)$ from $\{x,y\}^*$
such that the condition $\U(A,B)=\V(A,B)$ holds for all $A,B\in\R^{n\times n}$.
A word $u\in\{x,y\}^*$ is called a \textit{subword} of $v\in\{x,y\}^*$ if there are
$w_1,w_2\in\{x,y\}^*$ such that $v=w_1uw_2$.

For any $s_1,\ldots,s_n\in\R$, we define a \textit{similarity transformation} on $\R^{n\times n}$, which
sends a matrix $C$ to the matrix with $(i,j)$th entry equal to $C_{ij}+s_i-s_j$. Subsets $S_1, S_2\subset\R^{n\times n}$
are called \textit{similar} if there is a similarity transformation sending $S_1$ to $S_2$. Clearly, every similarity transformation is a semigroup automorphism on $\R^{n\times n}$.

The \textit{tropical permanent} of a matrix $A\in\R^{n\times n}$ is defined as
$$\operatorname{perm}(A)=\min_{\sigma} \left\{A_{1,\sigma(1)}+\ldots+A_{n,\sigma(n)}\right\},$$
where $\sigma$ runs over the symmetric group on $\{1,\ldots,n\}$.
We will write $\Sigma(A)$ for the set of all permutations $\tau$ providing
the minimum for permanent, that is, satisfying $A_{1,\tau(1)}+\ldots
+A_{n,\tau(n)}=\operatorname{perm}(A)$. We say that $A$ is \textit{sign-nonsingular}
if all permutations in $\Sigma(A)$ have the same parity.

\section{Some properties of singular matrices}

One of the standard tools in tropical mathematics is based on the fact that the
tropical semiring can be thought of as the image of a field with non-Archimedean
valuation~\cite{DSS}. A related technique allows us to prove of the following theorem, which strenghthens the result is contained in Proposition 3.4 of~\cite{Mer}. We denote by $\S$ the ring of all formal sums of the form $s=\sum_{t\in\R} c_tX^{t}$ in which only finitely many $c_i\in\R$ are nonzero. Denote by $\deg s$ the \textit{degree} of $s$, that is, the minimal $t$ such that $c_t\neq0$. By $\S'$ we denote the subset of $\S$ consisting of those non-zero sums in which the coefficients $c_i$ are non-negative. Defining the operations on $\S$ as formal addition and multiplication, we note that the degree mapping is a homomorphism from $\S'$ to the tropical semiring.

\begin{thm}\label{lemtropnonsin}
Assume that the tropical product $AB$ of matrices $A$ and $B$ from $\R^{n\times n}$
is sign-nonsingular, assume also $\sigma\in\Sigma(A)$ and $\tau\in\Sigma(B)$.
Then we have $\tau\sigma\in\Sigma(AB)$ and $\operatorname{perm}(A)+\operatorname{perm}(B)=\operatorname{perm}(AB)$.
\end{thm}

\begin{proof}
\textit{Step~1.} Denote $\psi=\tau\sigma$. We have $[AB]_{i,\psi(i)}\leq A_{i,\sigma(i)}+B_{\sigma(i),\psi(i)}$ by the definition of tropical product,
so that $\operatorname{perm}(AB)\leq \sum_{i=1}^n[AB]_{i,\psi(i)}\leq \operatorname{perm}(A)+\operatorname{perm}(B)$.

\textit{Step~2.} Construct the matrices $A'$ and $B'$ over $\S$ (and over $\S'$) by setting $A'_{ij}=\xi_{ij}X^{A_{ij}}$ and $B'_{ij}=\chi_{ij}X^{B_{ij}}$.
Taking the coefficients $\xi_{ij}>0$ and $\chi_{ij}>0$ algebraically independent over the rationals, we ensure that $\deg\det A'=\operatorname{perm}(A)$
and $\deg\det B'=\operatorname{perm}(B)$. Defining the matrix $C'$ as the usual product of matrices $A'$ and $B'$, we see that $\det C'=\det A'\cdot\det B'$
because the determinant is multiplicative for matrices over rings. Taking the degrees of both sides in the previous equation, we get
$\deg\det C'=\operatorname{perm}(A)+\operatorname{perm}(B)$.

\textit{Step~3.} Denote by $\pi$ a permutation for which $C'_{1,\pi(1)}\ldots C'_{n,\pi(n)}$ has minimal possible degree $d$; assume
that $d<\operatorname{perm}(A)+\operatorname{perm}(B)$. Step~2 implies that the cancellation of degree-$d$ terms happens in the expression
$\det C'=\sum_{\nu} (-1)^\nu C'_{1,\nu(1)}\ldots C'_{n,\nu(n)}$. In other words, there should be a permutation $\pi'$ of parity different
from that of $\pi$ for which $C'_{1,\pi'(1)}\ldots C'_{n,\pi'(n)}$ has degree $d$. Since $\deg$ is a homomorphism, the matrix $AB$ can be
obtained from $C'$ by entrywise application of the degree mapping. Therefore $\mathrm{perm}(AB)=d$, and then $\pi,\pi'\in\Sigma(AB)$, so
$AB$ is sign-singular.

\textit{Step~4.} The contradiction obtained in Step~3 shows that $d\geq\operatorname{perm}(A)+\operatorname{perm}(B)$, that
is, $\mathrm{perm}(AB)\geq \operatorname{perm}(A)+\operatorname{perm}(B)$. Now the result follows from Step~1.
\end{proof}

\begin{cor}\cite[Theorem 9.4, part 2]{AGG}\label{lemtropnonsincor}
If the tropical product $AB$ of square matrices is sign-nonsingular, then both $A$ and $B$ are sign-nonsingular as well.
\end{cor}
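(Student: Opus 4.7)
The plan is to prove the contrapositive: assume that one of $A, B$ is sign-singular and deduce that $AB$ is sign-singular, which will directly give the corollary. Theorem~\ref{lemtropnonsin} already tells us, under the hypothesis that $AB$ is sign-nonsingular, how to produce minimizers of the permanent of $AB$ from minimizers of the permanents of $A$ and $B$, so the strategy is simply to exploit that construction to manufacture two minimizers of $\operatorname{perm}(AB)$ having opposite parities.

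In detail, I would argue as follows. Suppose first, for contradiction, that $AB$ is sign-nonsingular while $A$ is sign-singular. By definition of sign-singularity, there exist $\sigma_1,\sigma_2\in\Sigma(A)$ of opposite parities; pick any $\tau\in\Sigma(B)$ (which exists since the minimum defining the permanent is always attained). Applying Theorem~\ref{lemtropnonsin} with $\sigma=\sigma_1$ and then $\sigma=\sigma_2$, we obtain $\tau\sigma_1,\tau\sigma_2\in\Sigma(AB)$. Since left multiplication by $\tau$ is a bijection on the symmetric group that shifts parity by a fixed amount (namely the parity of $\tau$), the permutations $\tau\sigma_1$ and $\tau\sigma_2$ have opposite parities, contradicting the sign-nonsingularity of $AB$.

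The argument for $B$ is symmetric: if $B$ is sign-singular, choose $\tau_1,\tau_2\in\Sigma(B)$ of opposite parities and any $\sigma\in\Sigma(A)$; Theorem~\ref{lemtropnonsin} again yields $\tau_1\sigma,\tau_2\sigma\in\Sigma(AB)$, and these have opposite parities because right multiplication by $\sigma$ also shifts parity by a fixed amount. This contradiction completes the proof.

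There is no real obstacle here; the only point that needs a moment's thought is the parity bookkeeping, namely that multiplying two permutations of opposite parities by a third (fixed) permutation on the same side again produces a pair of opposite parities. Everything else is a direct invocation of the preceding theorem together with the elementary fact that $\Sigma(A)$ and $\Sigma(B)$ are never empty.
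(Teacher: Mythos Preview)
Your argument is correct and is essentially the same as the paper's: both use Theorem~\ref{lemtropnonsin} to push minimizers $\sigma\in\Sigma(A)$, $\tau\in\Sigma(B)$ to $\tau\sigma\in\Sigma(AB)$ and then compare parities. The paper phrases it directly rather than by contradiction (taking arbitrary $\sigma_1,\sigma_2\in\Sigma(A)$, $\tau_1,\tau_2\in\Sigma(B)$ and noting that all four $\tau_i\sigma_j$ lie in $\Sigma(AB)$, hence share a parity), but the content is identical.
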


%
%

\begin{cor}\label{corpower}
For $A\in\R^{n\times n}$, either $A^{n!}$ is sign-singular or $id\in\Sigma(A^{n!})$.
\end{cor}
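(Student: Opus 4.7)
The plan is to verify the contrapositive: if $A^{n!}$ is sign-nonsingular, then $id\in\Sigma(A^{n!})$. The whole argument will combine Theorem~\ref{lemtropnonsin} and Corollary~\ref{lemtropnonsincor} with Lagrange's theorem, so no new machinery is needed.

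First, I would set up the situation by a preliminary application of Corollary~\ref{lemtropnonsincor}. Writing $A^{n!}=A^{k}\cdot A^{n!-k}$ for each $1\leq k\leq n!-1$ and applying the corollary to every such factorization, I see that every intermediate power $A^k$ with $1\leq k\leq n!$ is itself sign-nonsingular. In particular $\Sigma(A)$ is nonempty, so I fix some $\sigma\in\Sigma(A)$.

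The core of the proof is an induction on $k$ establishing that $\sigma^k\in\Sigma(A^k)$ for every $k\geq 1$. The base $k=1$ is tautological. For the induction step, the product $A^k=A^{k-1}\cdot A$ is sign-nonsingular by the previous paragraph, so Theorem~\ref{lemtropnonsin} applies with the factors $A^{k-1}$, $A$ and the permutations $\sigma^{k-1}\in\Sigma(A^{k-1})$, $\sigma\in\Sigma(A)$, yielding $\sigma\cdot\sigma^{k-1}=\sigma^k\in\Sigma(A^k)$. Setting $k=n!$ and invoking Lagrange's theorem on $S_n$, whose order is $n!$, we get $\sigma^{n!}=id$, so $id\in\Sigma(A^{n!})$.

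I do not foresee a genuine obstacle here; the argument is essentially mechanical given the preceding results. The only point requiring any care is the sequencing: every application of Theorem~\ref{lemtropnonsin} needs the relevant product to be known in advance to be sign-nonsingular, and this is exactly what the initial use of Corollary~\ref{lemtropnonsincor} arranges uniformly for all intermediate powers of $A$.
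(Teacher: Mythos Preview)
Your argument is correct; the paper actually states this corollary without proof, presumably because it is the obvious consequence of Theorem~\ref{lemtropnonsin} and Corollary~\ref{lemtropnonsincor} together with $\sigma^{n!}=id$ for any $\sigma\in S_n$. Your write-up is exactly the natural way to fill in these omitted details, with the preliminary observation that all intermediate powers $A^k$ are sign-nonsingular being precisely the point needed to make the induction go through.
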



\begin{cor}\label{lemnowwillbe}
Assume $A,B\in\R^{n\times n}$ and both $A^{n!}B^{n!}$ and $B^{n!}A^{n!}$ are sign-nonsingular.
Then $\left[A^{n!}B^{n!}\right]_{ii}=\left[B^{n!}A^{n!}\right]_{ii}$ for every $i$.
\end{cor}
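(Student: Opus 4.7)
The plan is to exploit the preceding theorem and corollaries to locate the identity permutation inside $\Sigma(A^{n!}B^{n!})$ and $\Sigma(B^{n!}A^{n!})$, and then squeeze the diagonal entries against a term-by-term upper bound.

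First, I would apply Corollary~\ref{lemtropnonsincor} to either product, say $A^{n!}B^{n!}$: its sign-nonsingularity forces $A^{n!}$ and $B^{n!}$ themselves to be sign-nonsingular. Corollary~\ref{corpower} then upgrades this to $\mathrm{id}\in\Sigma(A^{n!})$ and $\mathrm{id}\in\Sigma(B^{n!})$, so $\operatorname{perm}(A^{n!})=\sum_i A^{n!}_{ii}$ and $\operatorname{perm}(B^{n!})=\sum_i B^{n!}_{ii}$.

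Next, I would feed $\sigma=\tau=\mathrm{id}$ into Theorem~\ref{lemtropnonsin} applied to the pair $(A^{n!},B^{n!})$. This yields $\mathrm{id}\in\Sigma(A^{n!}B^{n!})$ together with the perm-additivity $\operatorname{perm}(A^{n!}B^{n!})=\operatorname{perm}(A^{n!})+\operatorname{perm}(B^{n!})$. In other words,
\[
\sum_{i=1}^n [A^{n!}B^{n!}]_{ii}=\sum_{i=1}^n\bigl(A^{n!}_{ii}+B^{n!}_{ii}\bigr).
\]
On the other hand, the tropical product formula gives the entrywise estimate $[A^{n!}B^{n!}]_{ii}\leq A^{n!}_{ii}+B^{n!}_{ii}$, one term for each $i$. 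A sum of nonpositive differences that vanishes must vanish termwise, so $[A^{n!}B^{n!}]_{ii}=A^{n!}_{ii}+B^{n!}_{ii}$ for every $i$.

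The sign-nonsingularity hypothesis on $B^{n!}A^{n!}$ lets me run exactly the same argument for that product, obtaining $[B^{n!}A^{n!}]_{ii}=B^{n!}_{ii}+A^{n!}_{ii}$. Comparing the two expressions gives the claim. There is no real obstacle here; the only substantive point is remembering to squeeze, rather than merely summing, so that term-by-term equality emerges from the entrywise tropical-product inequality combined with the perm-additivity provided by Theorem~\ref{lemtropnonsin}.
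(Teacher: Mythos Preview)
Your proposal is correct and follows essentially the same route as the paper: invoke Corollary~\ref{corpower} (via Corollary~\ref{lemtropnonsincor}) to get $\mathrm{id}\in\Sigma(A^{n!})\cap\Sigma(B^{n!})$, use Theorem~\ref{lemtropnonsin} for perm-additivity, then squeeze the entrywise inequality $[A^{n!}B^{n!}]_{ii}\le A^{n!}_{ii}+B^{n!}_{ii}$ against the diagonal sum to force termwise equality, and repeat for $B^{n!}A^{n!}$. The paper's proof is slightly terser but identical in substance.
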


\begin{proof}
By Theorem~\ref{lemtropnonsin}, both $A^{n!}B^{n!}$ and $B^{n!}A^{n!}$ have permanent $\operatorname{perm}(A^{n!})+\operatorname{perm}(B^{n!})$,
which is equal to $\sum_{i=1}^n\left[A^{n!}\right]_{ii}+\sum_{i=1}^n\left[B^{n!}\right]_{ii}$ by Corollary~\ref{corpower}.
The definition of tropical product implies $\left[A^{n!}B^{n!}\right]_{ii}\leq \left[A^{n!}\right]_{ii}+\left[B^{n!}\right]_{ii}$,
$\left[B^{n!}A^{n!}\right]_{ii}\leq \left[A^{n!}\right]_{ii}+\left[B^{n!}\right]_{ii}$, thus
$\left[A^{n!}B^{n!}\right]_{ii}=\left[B^{n!}A^{n!}\right]_{ii}=\left[A^{n!}\right]_{ii}+\left[B^{n!}\right]_{ii}$.
\end{proof}

Let us turn our attention to sign-singular matrices of order $3$. We will use the following characterization,
which is well-known in tropical linear algebra.

\begin{lem}\cite{AGG}\label{lem3x3sin}
Let a matrix $A\in\R^{3\times3}$ be sign-singular. Then there are matrices $P\in\R^{3\times2}$ and $Q\in\R^{2\times3}$ satisfying $PQ=A$.
\end{lem}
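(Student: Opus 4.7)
The plan is to construct $P$ and $Q$ explicitly by first normalizing $A$ and then exhibiting two candidate factorizations covering a complementary pair of cases.

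The existence of a factorization $A = PQ$ is preserved under tropical row and column scalings $A \mapsto D_1 A D_2$ (since $D_1 A D_2 = (D_1 P)(Q D_2)$) and under row/column permutations. Combining these symmetries with sign-singularity of $A$, I would reduce to the case $\operatorname{perm}(A) = 0$, $A_{ii} = 0$ for each $i$, and $A_{12} + A_{21} = 0$, the last coming from an odd permutation in $\Sigma(A)$ after relabeling indices. Writing $a = A_{12}$, $b = A_{13}$, $c = A_{23}$, $b' = A_{31}$, $c' = A_{32}$, the non-negativity of the four remaining permutation sums becomes
\[
b + b' \geq 0,\quad c + c' \geq 0,\quad a + b' + c \geq 0,\quad b + c' - a \geq 0.
\]

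Next I would split on the sign of $a + b' - c'$. In the case $a + b' \geq c'$, I take $P$ to consist of the first and third columns of $A$ and set
\[
Q = \begin{pmatrix} 0 & a & \max\{b,\, a+c\} \\ b' & c' & 0 \end{pmatrix}.
\]
In the case $a + b' \leq c'$, I take $P$ to consist of the second and third columns of $A$ and set
\[
Q = \begin{pmatrix} -a & 0 & \max\{b-a,\, c\} \\ b' & c' & 0 \end{pmatrix}.
\]
Verifying $PQ = A$ is a direct entry-by-entry check invoking the four inequalities above.

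The main obstacle is locating the right dichotomy. No single uniform formula for $P, Q$ works: for instance, the first formula produces $[PQ]_{32} = \min\{a+b',\,c'\}$, which matches $A_{32} = c'$ precisely when $a + b' \geq c'$. When the opposite inequality holds, a different pair of columns of $A$ must serve as the columns of $P$, and identifying the complementary formula is the main content of the argument.
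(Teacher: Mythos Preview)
Your argument is correct. The reduction via row/column scalings and permutations to a normalized matrix with zero diagonal, $\operatorname{perm}(A)=0$, and $A_{12}+A_{21}=0$ is valid, and the entry-by-entry verification of $PQ=A$ in each of the two cases goes through exactly using the four inequalities you listed (in particular, the $(3,3)$ entry in Case~1 uses $b+b'\geq 0$ together with $a+c+b'\geq 0$, and the $(3,3)$ entry in Case~2 uses $c+c'\geq 0$ together with $b+c'-a\geq 0$).

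Your route, however, differs substantially from the paper's. The paper does not construct $P$ and $Q$ at all; it simply observes that sign-singularity means the tropical determinantal rank of $A$ is at most~$2$, and then invokes the general comparison between determinantal rank and factor rank established in Akian--Gaubert--Guterman (Theorem~8.3 and Corollary~8.12 of~\cite{AGG}) to conclude that the factor rank is also at most~$2$. So the paper's proof is a one-line appeal to a known structural theorem, while yours is a self-contained elementary construction specific to the $3\times 3$ case. Your approach has the advantage of being explicit and not relying on external machinery; the paper's approach makes clear that the lemma is an instance of a general rank phenomenon and would scale (in principle) to other sizes without a new case analysis.
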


\begin{proof}
In terms of~\cite{AGG}, we need to show that the factor rank of $A$ cannot exceed $2$ if its determinantal rank does not exceed $2$.
This result follows from Theorem~8.3 and Corollary~8.12 of~\cite{AGG}.
\end{proof}

We finalize the section by showing how to construct identities for matrices which admit factorizations as those in Lemma~\ref{lem3x3sin}.

\begin{lem}\label{lem3x3fact}
Consider matrices $A=P_1Q_1$ and $B=P_2Q_2$, where $P_1,P_2\in\R^{(n+1)\times n}$ and $Q_1,Q_2\in\R^{n\times(n+1)}$.
If the identity $\U_n(A',B')=\V_n(A',B')$ holds for all matrices of order $n$, then $\U_n(A,AB)A=\V_n(A,AB)A$.
\end{lem}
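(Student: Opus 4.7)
The plan is to show that the given factorizations let us rewrite $W(AA,AB)\cdot A$ in the form $P_1 \cdot W(A'',B'') \cdot Q_1$ for suitable $n\times n$ matrices $A''$ and $B''$. Applying the assumed identity to this pair then gives $\U_n(A'',B'')=\V_n(A'',B'')$, and sandwiching this equality between $P_1$ and $Q_1$ yields the claim at once.

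The key observation is that although $AA=P_1Q_1P_1Q_1$ ends in $Q_1$ while $AB=P_1Q_1P_2Q_2$ ends in $Q_2$, both letters begin with the block $P_1Q_1$. Writing a word $W(x,y)=z_1\cdots z_k$ with $z_i\in\{x,y\}$ and expanding, the product $W(AA,AB)\cdot A$ becomes a concatenation of $k$ blocks of length $4$ followed by the trailing block $P_1Q_1$ coming from the final $A$. Regrouping by associativity, I would write
\begin{equation*}
W(AA,AB)\cdot A = P_1 \cdot (Q_1X_1P_1)(Q_1X_2P_1)\cdots(Q_1X_kP_1)\cdot Q_1,
\end{equation*}
where $X_i=P_1Q_1$ if $z_i=x$ and $X_i=P_2Q_2$ if $z_i=y$. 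Now set $A''=Q_1AP_1=Q_1P_1Q_1P_1$ and $B''=Q_1BP_1=Q_1P_2Q_2P_1$, which lie in $\R^{n\times n}$. Then each middle factor $Q_1X_iP_1$ equals $A''$ or $B''$ according as $z_i$ is $x$ or $y$, so the entire middle product is exactly $W(A'',B'')$.

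Applying this identity with $W=\U_n$ and with $W=\V_n$, and invoking the hypothesis $\U_n(A'',B'')=\V_n(A'',B'')$, the lemma will follow.

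The main obstacle here is essentially conceptual rather than technical: one must notice that the sole function of the trailing factor $A$ is to convert the mismatched $Q_2$-endings of $AB$-blocks into the uniform $Q_1$-ending required to expose a block structure of the form $P_1\,(\cdot)\,Q_1$. Once this regrouping is identified, the remainder of the argument is just routine bookkeeping with the associativity of tropical matrix multiplication.
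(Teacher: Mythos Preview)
Your proposal is correct and follows exactly the same approach as the paper: the paper's proof consists of the single observation that $\U_n(AA,AB)A=P_1\,\U_n(Q_1P_1Q_1P_1,\,Q_1P_2Q_2P_1)\,Q_1$ and likewise for $\V_n$, which is precisely the regrouping you spell out in detail. Your explanation of why the trailing factor $A$ is needed is a helpful elaboration, but the underlying argument is identical.
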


\begin{proof}
This follows from the equalities $\U_n(A,AB)A=P_1\U_n(Q_1P_1,Q_1P_2Q_2P_1)Q_1$ and $\V_n(A,AB)A=P_1\,\V_n(Q_1P_1,Q_1P_2Q_2P_1)\,Q_1$.
\end{proof}

\section{Identities for matrices that are not diagonally dominant}

Now let $H$ be a positive real; we say
that a matrix $A\in\R^{n\times n}$ is \textit{diagonally $H$-dominant}
if the inequality $A_{ij}\geq\max\{A_{ii}, A_{jj}\}+H|A_{ii}-A_{jj}|$
holds for all $i$, $j$. We say that $A,B\in\R^{n\times n}$
are a \textit{diagonally $H$-dominant pair} if (1) $A_{ii}=B_{ii}$, for all $i$,
and (2) the matrix $A\oplus B$, whose $(i,j)$th entry equals $\min\{A_{ij},B_{ij}\}$,
is diagonally $H$-dominant.

\begin{rem}\label{remuptr}
Izhakian~\cite{IzIden} considers the tropical semiring extended by an infinite positive element $\infty$,
and defines a matrix $A$ to be upper triangular if $A_{ij}=\infty$ whenever $i>j$. We note that any pair
$(U,V)$ of upper triangular matrices satisfies $[UV]_{ii}=[VU]_{ii}$, for any $i$. In other words, the
pair $(UV,VU)$ is the limit of a sequence of pairs similar to diagonally $H$-dominant pairs, for arbitrarily large $H$.
\end{rem}

Let us prove some properties of diagonally dominant matrices.



\begin{lem}\cite{Kuhn}\label{lemassig}
Let a matrix $M\in\R^{n\times n}$ satisfy $M_{ii}=0$, for any $i\in\{1,\ldots,n\}$. Assume that for any permutation
$\sigma$ on $\{1,\ldots,n\}$, it holds that $M_{1\sigma(1)}+\ldots+M_{n\sigma(n)}\geq0$. Then $M$ is similar to a matrix
whose entries are nonnegative. 
\end{lem}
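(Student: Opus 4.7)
The plan is to reduce the statement to the classical fact that a weighted digraph admits a system of node potentials realizing every edge as nonnegative if and only if it has no negative cycle, and then to use the permanent hypothesis to rule out negative cycles.

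First, I observe that applying a similarity transformation with parameters $s_1,\ldots,s_n$ to $M$ produces the matrix with $(i,j)$th entry $M_{ij}+s_i-s_j$. Hence $M$ is similar to a nonnegative matrix precisely when there exist reals $s_1,\ldots,s_n$ with $s_j-s_i\leq M_{ij}$ for all $i,j\in\{1,\ldots,n\}$. I view this as a potential problem on the complete digraph $G$ on vertices $\{1,\ldots,n\}$ with edge weights $w(i,j)=M_{ij}$; loops have weight $M_{ii}=0$.

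Second, I claim $G$ has no directed cycle of negative total weight. Suppose, for contradiction, that $i_1\to i_2\to\ldots\to i_k\to i_1$ is a cycle on pairwise distinct vertices with $\sum_{r=1}^{k}M_{i_r,i_{r+1}}<0$ (indices mod $k$). Define the permutation $\sigma$ that acts as this $k$-cycle on $\{i_1,\ldots,i_k\}$ and fixes every other index. Using $M_{ii}=0$ for the fixed points, one has $\sum_{i=1}^n M_{i,\sigma(i)}=\sum_{r=1}^k M_{i_r,i_{r+1}}<0$, contradicting the hypothesis on $M$.

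Third, with no negative cycles in a digraph whose every pair of vertices is connected (every edge is present and finite), the shortest-path distances are well defined. I set
\[
s_j=\min\{M_{j_0,j_1}+M_{j_1,j_2}+\ldots+M_{j_{\ell-1},j_\ell}\},
\]
the minimum being taken over all finite walks $1=j_0,j_1,\ldots,j_\ell=j$ in $G$; this is a real number since no negative cycle can be traversed to decrease the sum indefinitely. By the triangle inequality for shortest paths, $s_j\leq s_i+M_{ij}$ for all $i,j$, i.e., $s_j-s_i\leq M_{ij}$, which is exactly what was needed.

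The only nontrivial step is the negative-cycle ruling-out in the second paragraph, and the trick there is just to extend a short negative cycle to a full permutation using the vanishing diagonal; everything else is the standard potential/Bellman--Ford argument.
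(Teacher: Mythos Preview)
Your proof is correct and takes a genuinely different route from the paper. The paper invokes the Hungarian method as a black box to produce dual variables $r_i,s_j$ with $M_{ij}+r_i+s_j\ge 0$, and then argues (using that the identity permutation already achieves the minimum value $0$ together with $M_{ii}=0$) that $r_i=-s_i$, so that the transformation is in fact a similarity. You instead recast the question as a system of difference constraints $s_j-s_i\le M_{ij}$, rule out negative simple cycles by extending any such cycle to a full permutation via the zero diagonal, and then build the potentials explicitly as Bellman--Ford shortest-path distances. Your approach is more self-contained and makes transparent precisely why the permutation hypothesis is the right one; the paper's version is terser but relies on knowing what the Hungarian method outputs and still needs the extra step of collapsing the two families $r_i,s_j$ into one.
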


\begin{proof}
The \textit{Hungarian method}~\cite{Kuhn} allows one to find $r_i, s_j\in\R$
such that $M'_{ij}=M_{ij}+r_i+s_j$ is nonnegative for any $i,j$, and there is a permutation $\tau$ such that
$M'_{1\tau(1)}+\ldots+M'_{n\tau(n)}=0$. From the definition of $M'$ we get $\Sigma(M)=\Sigma(M')$, which implies
$M'_{11}+\ldots+M'_{nn}=0$, and we conclude that $r_i=-s_i$.
\end{proof}

\begin{lem}\label{lemdiagdomcha2}
Let $C$ be an $n$-by-$n$ matrix and $H$ a positive real. Assume that for any set
$K\subset\{1,\ldots,n\}$ and for any cyclic permutation $\sigma$ on $K$ it holds that
$$\sum_{\kappa\in K} C_{\kappa,\sigma(\kappa)}\geq |K|\max_{\kappa\in K}\{C_{\kappa\kappa}\}+H\sum_{\kappa\in K}|C_{\kappa,\kappa}-C_{\sigma(\kappa),\sigma(\kappa)}|.$$
Then $C$ is similar to a diagonally $H$-dominant matrix.
\end{lem}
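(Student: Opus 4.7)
The plan is to reduce the lemma to Lemma~\ref{lemassig} by way of an auxiliary matrix. I would introduce
\[ M_{ij} := C_{ij} - \max\{C_{ii}, C_{jj}\} - H\,|C_{ii} - C_{jj}|, \]
which has $M_{ii} = 0$ for every $i$. Because the similarity transformation used in the paper leaves diagonal entries invariant, nonnegativity of a similar copy of $M$ is equivalent to saying that the corresponding similar copy of $C$ satisfies the defining inequality of diagonal $H$-dominance. So the lemma will follow once I verify the hypothesis of Lemma~\ref{lemassig} for $M$, namely that $\sum_{i=1}^n M_{i,\sigma(i)} \geq 0$ for every permutation $\sigma$ on $\{1,\ldots,n\}$.

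To verify this for an arbitrary $\sigma$, I would split $\sigma$ into disjoint cycles supported on subsets $K_1,\ldots,K_m \subseteq \{1,\ldots,n\}$; the sum $\sum_i M_{i,\sigma(i)}$ then splits as a sum of analogous partial sums over the individual cycles, so it suffices to treat a single cyclic permutation $\sigma$ on a subset $K$. Unwinding the definition of $M$, the task becomes showing
\[ \sum_{\kappa \in K} C_{\kappa,\sigma(\kappa)} \geq \sum_{\kappa \in K} \max\{C_{\kappa\kappa}, C_{\sigma(\kappa),\sigma(\kappa)}\} + H \sum_{\kappa \in K} |C_{\kappa\kappa} - C_{\sigma(\kappa),\sigma(\kappa)}|. \]
The hypothesis of the lemma gives precisely the stronger lower bound $|K|\,\max_{\kappa \in K}\{C_{\kappa\kappa}\} + H\sum_{\kappa\in K}|C_{\kappa\kappa}-C_{\sigma(\kappa),\sigma(\kappa)}|$ for the left-hand side, so the $H$-weighted sums cancel and only the termwise-obvious inequality $|K|\,\max_{\kappa \in K}\{C_{\kappa\kappa}\} \geq \sum_{\kappa \in K}\max\{C_{\kappa\kappa}, C_{\sigma(\kappa),\sigma(\kappa)}\}$ remains.

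I do not foresee a real obstacle: the construction of $M$ is arranged exactly so that the cycle-by-cycle form of the hypothesis feeds directly into the assignment-type criterion of Lemma~\ref{lemassig}. The only point requiring attention is the bookkeeping which confirms that the $H$-terms match up identically and that the residual $\max$-inequality is trivial; both rely only on the observation that $\max\{C_{\kappa\kappa}, C_{\sigma(\kappa),\sigma(\kappa)}\}$ is bounded above by $\max_{\kappa\in K}\{C_{\kappa\kappa}\}$ for each $\kappa$.
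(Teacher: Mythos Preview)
Your proposal is correct and follows the same route as the paper: the auxiliary matrix $M$ is exactly the paper's $D$, the cycle-by-cycle verification and the residual inequality $|K|\max_{\kappa\in K}\{C_{\kappa\kappa}\}\geq\sum_{\kappa\in K}\max\{C_{\kappa\kappa},C_{\sigma(\kappa),\sigma(\kappa)}\}$ are identical, and both finish by invoking Lemma~\ref{lemassig}. If anything, you are slightly more explicit than the paper in spelling out the cycle decomposition of a general permutation and the reason why a similarity making $M$ nonnegative makes $C$ diagonally $H$-dominant.
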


\begin{proof}
Consider the matrix $D$ defined as $D_{ij}=C_{ij}-H|C_{ii}-C_{jj}|-\max\{C_{ii},C_{jj}\}$.
For any $K\subset\{1,\ldots,n\}$ and a cyclic permutation $\sigma$ on $K$, we have
$$\sum_{\kappa\in K} D_{\kappa,\sigma(\kappa)}=\sum_{\kappa\in K} C_{\kappa,\sigma(\kappa)}-
H\sum_{\kappa\in K}|C_{\kappa,\kappa}-C_{\sigma(\kappa),\sigma(\kappa)}|-\sum_{\kappa\in K}\max\{C_{\kappa,\kappa},C_{\sigma(\kappa),\sigma(\kappa)}\},$$
so that $\sum_{\kappa\in K} D_{\kappa,\sigma(\kappa)}\geq |K|\max_{\kappa\in K}\{C_{\kappa\kappa}\}-
\sum_{\kappa\in K}\max\{C_{\kappa,\kappa},C_{\sigma(\kappa),\sigma(\kappa)}\}\geq0$.

Now we see that the matrix $D$ satisfies the assumptions of Lemma~\ref{lemassig},
so there exist $r_1,\ldots,r_n\in\R$ such that the numbers $D_{ij}-r_i+r_j$ are nonnegative for all $i$, $j$. So the number $C'_{ij}=C_{ij}-r_i+r_j$ is not less than $H|C_{ii}-C_{jj}|+\max\{C_{ii},C_{jj}\}$,
and the matrix $C'$ is diagonally $H$-dominant.
\end{proof}

The following is a key result of the section. We enumerate by
$w_1,\ldots,w_{2^n}\in\{x,y\}^*$ all the words of length $n$ taken in an arbitrary order,
and we denote $\Gamma(x,y)=w_1\ldots w_{2^n}$.

\begin{thm}\label{lemdiagdomcha3}
Let matrices $A,B\in\R^{n\times n}$ satisfy $id\in\Sigma(A)\cap\Sigma(B)$ and $A_{ii}=B_{ii}$, for every $i$. Let $h$ be a positive integer. If $A,B$ are not similar to a diagonally $h$-dominant pair, then the matrix $\Gamma(A^{h+1}, B^{h+1})$ is sign-singular.
\end{thm}

\begin{proof}
Denote $C=A\oplus B$. Lemma~\ref{lemdiagdomcha2} shows that, for some $K\subset\{1,\ldots,n\}$ and a cyclic permutation $\sigma$ on $K$, we have
\begin{equation}\label{eqfor3}\sum_{\kappa\in K} C_{\kappa,\sigma(\kappa)}<|K|\max_{\kappa\in K}\{C_{\kappa\kappa}\}+h\sum_{\kappa\in K}|C_{\kappa,\kappa}-C_{\sigma(\kappa),\sigma(\kappa)}|.\end{equation}
We can assume that $\sigma=(k_1\,k_2\ldots k_t)$ and that $C_{k_1k_1}$ is maximal
over all $C_{kk}$ with $k\in K$. We set $X(i,j)=A$ when $A_{ij}<B_{ij}$, and $X(i,j)=B$ otherwise.
In this notation, we have $\chi_{ij}:=\left[X(i,j)^{h+1}\right]_{ij}\leq C_{ij}+h\min\{C_{ii},C_{jj}\}$. (This inequality follows from the definition of matrix multiplication.)

Denoting by $P$ the product $(X(k_1,k_2))^{h+1}\ldots (X(k_t,k_1))^{h+1}$, we obtain
$P_{k_1k_1}\leq \chi_{k_1k_2}+\ldots+\chi_{k_tk_1}$ again by the definition of matrix multiplication. We get
$$P_{k_1k_1}\leq \sum_{\kappa\in K} C_{\kappa,\sigma(\kappa)}+h\sum_{\kappa\in K}\min\{C_{\kappa,\kappa},C_{\sigma(\kappa),\sigma(\kappa)}\},$$
which implies by taking into account~\eqref{eqfor3} that $P_{k_1k_1}<(h+1)|K|C_{k_1k_1}$.
By the definition of matrix multiplication, we have $P_{ii}\leq (h+1)|K| C_{ii}$ for all $i$, so that $\operatorname{perm}(P)<(h+1)|K|\sum_{i=1}^nC_{ii}$.

If $\Gamma(A^{h+1}, B^{h+1})$ was sign-nonsingular, so would be $P$ by Corollary~\ref{lemtropnonsincor}.
Then Theorem~\ref{lemtropnonsin} would imply $\operatorname{perm}(P)=(h+1)|K|\sum_{i=1}^nC_{ii}$,
which is a contradiction.
\end{proof}

\section{Identities for diagonally dominant matrices}

In this section, we construct a semigroup identity which holds,
if $H$ is sufficiently large, for diagonally $H$-dominant matrices.
This result is a generalization of a similar result~\cite{IzIden}
for upper triangular matrices, as Remark~\ref{remuptr} shows
(because the tropical operations are continuous).

\begin{lem}\label{lemcyc}
Let $A\in\R^{n\times n}$ be a diagonally $H$-dominant matrix. Then, for any fixed index $i$,
the expression $$\alpha=A_{i i_1}+A_{i_1 i_2}+\ldots+A_{i_{h-1}i_h}+A_{i_{h} i}$$ attains its
minimum when $i_1=\ldots=i_h=i$, provided that $h+1\leq H$.
\end{lem}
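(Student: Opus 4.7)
The plan is to apply the diagonal $H$-dominance of $A$ to each edge of the closed walk $i \to i_1 \to \cdots \to i_h \to i$ and reduce the inequality $\alpha \geq (h+1)A_{ii}$ to a one-variable statement about the sequence of diagonal entries visited. Writing $d_k := A_{i_k i_k}$ with the convention $i_0 = i_{h+1} = i$, so that $d_0 = d_{h+1} = A_{ii}$, the dominance hypothesis provides
\[
A_{i_k i_{k+1}} \;\geq\; \max\{d_k, d_{k+1}\} + H\,|d_k - d_{k+1}|
\]
for each $k = 0, 1, \ldots, h$ (trivially so when $i_k = i_{k+1}$). Summing over $k$, it suffices to prove
\[
\sum_{k=0}^{h}\max\{d_k, d_{k+1}\} \;+\; H\sum_{k=0}^{h}|d_k - d_{k+1}| \;\geq\; (h+1)\,A_{ii}.
\]

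For the algebraic core of the argument I would substitute $\max\{a,b\} = \tfrac{1}{2}(a+b) + \tfrac{1}{2}|a-b|$ and telescope the linear part; since $d_0 = d_{h+1}$, one has $\sum_{k=0}^{h}(d_k+d_{k+1})/2 = A_{ii} + \sum_{k=1}^{h} d_k$. Setting $e_k := A_{ii} - d_k$, so that $e_0 = e_{h+1} = 0$, collapses the desired inequality to
\[
\bigl(H + \tfrac{1}{2}\bigr)\,T \;\geq\; \sum_{k=1}^{h} e_k,
\]
where $T := \sum_{k=0}^{h}|e_k - e_{k+1}|$ is the total variation of the sequence $(e_k)$.

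The final step is a short walk estimate. If $E := \max_{1 \leq k \leq h} e_k$ is nonpositive, the right-hand side is $\leq 0$ and there is nothing to show. Otherwise $\sum_{k=1}^{h} e_k \leq hE$, while $T \geq 2E$ since the real sequence $e_0, e_1, \ldots, e_{h+1}$ starts and ends at $0$ and attains the positive value $E$ in between. The left-hand side is then bounded below by $(2H+1)E$, and the hypothesis $H \geq h+1$ gives $2H+1 \geq h$ with room to spare. I do not expect any genuine obstacle; the only care required is to confirm that values $e_k < 0$ cannot disrupt the walk estimate for $T$ and that the dominance inequality is safely applicable to pairs of coinciding indices.
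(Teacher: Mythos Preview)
Your argument is correct. The two loose ends you flag are harmless: when $i_k=i_{k+1}$ the dominance bound holds with equality, and the estimate $T\geq 2E$ follows from the triangle inequality on the two segments $[0,m]$ and $[m,h+1]$ (where $e_m=E$) regardless of whether some $e_k$ are negative.

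The paper's proof is organized differently. It first collapses consecutive repeats in the walk $(i,i_1,\ldots,i_h,i)$ to a shorter closed walk $(j_0,\ldots,j_t)$, bounds each removed diagonal term from below by the \emph{minimum} diagonal value $A_{jj}$ along the walk, and applies dominance only to the $t$ surviving off-diagonal edges using the crude estimate $\max\{A_{j_\tau j_\tau},A_{j_{\tau+1}j_{\tau+1}}\}\geq A_{jj}$. This yields $\alpha\geq (h+1)A_{jj}+H\cdot\mathrm{TV}$, and then the closed-walk total-variation bound is used to climb from $A_{jj}$ up to $\max_{g}A_{gg}\geq A_{ii}$. Your route is tighter at the outset: by writing $\max\{d_k,d_{k+1}\}=\tfrac12(d_k+d_{k+1})+\tfrac12|d_k-d_{k+1}|$ and telescoping, you anchor directly at $A_{ii}$ and need only the weaker peak bound $T\geq 2E$. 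This avoids the repeat-removal step and the auxiliary index $j$, at the cost of a small algebraic manipulation; the paper's version is more combinatorial and in fact proves the slightly stronger conclusion $\alpha\geq (h+1)\max_{g}A_{gg}$.
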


\begin{proof}
Let us remove an arbitrary term $i_k$ from the sequence $(i,i_1,\ldots,i_h,i)$ if it is equal to the
preceding term. Denote the resulting sequence as $J=(j_0\ldots j_t)$; denote by $j\in J$ the index for which $A_{jj}$ is minimal. Assuming $t>0$, we get
$\alpha\geq A_{j_0 j_1}+\ldots+A_{j_{t-1} j_t}+(h-t+1)A_{jj}$. Since $A$ is $H$-dominant, we obtain
$$\alpha\geq (h+1)A_{jj}+H\sum_{\tau=0}^{t-1}\left|A_{j_\tau j_\tau}-A_{j_{\tau+1}j_{\tau+1}}\right|.$$
The triangle inequality implies $\alpha\geq(h+1)\max_{g\in J} A_{gg}\geq (h+1)A_{ii}$.
It remains to note that $\alpha=(h+1)A_{ii}$ when $i_1=\ldots=i_h=i$.
\end{proof}

In the following lemma, we denote by $G\in\{A,B\}^*$ an arbitrary word which
contains, as subwords, all the words from $\{A,B\}^*$ that have length $n$.

\begin{lem}\label{lemfordiagdom2}
Let $A,B\in\R^{n\times n}$ be a diagonally $h$-dominant pair; denote by $g$ the length of $G$ and assume $h=2ng+1$.
Choose $X^{(ng+1)}\in\{A,B\}$ arbitrarily and denote by $X^{(t)}$ the $t$th letter of the word $G^n X^{(ng+1)}G^n$.
Then, for any fixed $\kappa_0$ and $\kappa_{h}$, the expression
$$\beta=X^{(1)}_{\kappa_{0},\kappa_1}+\ldots+X^{(h)}_{\kappa_{h-1},\kappa_{h}}$$
attains its minimum on some tuple $(\kappa_1\ldots\kappa_{h-1})$ satisfying $\kappa_{ng}=\kappa_{ng+1}$.
\end{lem}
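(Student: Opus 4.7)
The plan is to take a minimizer of $\beta$, flatten any embedded cycles using a multi-matrix extension of Lemma~\ref{lemcyc}, and then relocate the transition edges away from index $ng+1$ using the combinatorial richness of $G^n$.

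First I would set $C = A \oplus B$; by the hypotheses, $C$ is diagonally $h$-dominant, $X^{(t)}_{ii} = C_{ii}$ for all $i, t$, and $X^{(t)}_{ij} \geq C_{ij}$ in general, so each individual matrix $X^{(t)}$ is itself diagonally $h$-dominant with common diagonal $C_{ii}$. Fix any minimizer $(\kappa_0, \kappa_1^*, \ldots, \kappa_{h-1}^*, \kappa_h)$ of $\beta$ and form the reduced sequence $(j_0, \ldots, j_t)$ by deleting consecutive duplicates. If $j_a = j_b$ for some $a < b$, the loop between the corresponding positions is a cycle of at most $h$ edges; the multi-matrix analogue of Lemma~\ref{lemcyc} (whose proof carries over verbatim because each $X^{(t)}$ is diagonally $h$-dominant with the common diagonal $C_{ii}$) allows one to flatten the loop to a constant subpath without increasing cost. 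Iterating, the reduced sequence visits pairwise distinct vertices, so $t \leq n - 1$, and the minimizer consists of long constant runs separated by at most $n - 1$ transition edges $T_1 < \cdots < T_t$.

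Next I would show that each transition can be moved away from index $ng+1$ without increasing the total cost. By minimality, $X^{(T_s)}$ must achieve the entrywise minimum $C_{j_{s-1}, j_s}$ for every $s$: otherwise replacing $X^{(T_s)}$ by the correct letter at a nearby index (available because $G$ contains both single-letter subwords $A$ and $B$) would strictly decrease the cost. If some $T_s$ equals $ng+1$, I would shift it into the left half $G^n$ (indices $\leq ng$) or the right half $G^n$ (indices $\geq ng+2$), choosing the side according to the sign of $C_{j_{s-1}, j_{s-1}} - C_{j_s, j_s}$: shifting $T_s$ later into the right block does not increase the total stay cost when $C_{j_{s-1}, j_{s-1}} \leq C_{j_s, j_s}$, while shifting it earlier into the left block works in the opposite case. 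Since $G$ contains every length-$n$ word, both letters $A$ and $B$ occur with gaps at most $g$ throughout each block, so a valid relocation target of the required matrix type always exists in the chosen half.

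The main obstacle is ensuring that these relocations can be carried out coherently when they interact: adjacent transitions $T_{s-1}$ and $T_{s+1}$ constrain where $T_s$ can move, and more than one transition might need to claim the same alternative position. The specific length $h = 2ng + 1$, which allocates exactly one full $G^n$ block on each side of the middle letter, is what provides the room to perform all the necessary shifts simultaneously; a careful bookkeeping of positions in $G^n$, combined with the diagonal dominance inequality $C_{ij} \geq \max(C_{ii}, C_{jj}) + h\abs{C_{ii} - C_{jj}}$, should resolve the interactions.
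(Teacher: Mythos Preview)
Your flattening step (the multi-matrix version of Lemma~\ref{lemcyc}) matches the paper's Step~1, and the observation that the reduced vertex sequence has at most $n$ distinct entries, hence at most $n-1$ transitions, is exactly what the paper uses. The gap is in the relocation step. Nothing prevents the $\leq n-1$ transitions from sitting at consecutive positions around $ng+1$: say $T_s=ng+1$, $T_{s+1}=ng+2$, and $C_{j_{s-1}j_{s-1}}<C_{j_sj_s}$. Your rule says to push $T_s$ to the right, straight into $T_{s+1}$; a single-edge shift past a neighbouring transition is not a legal modification of the path, and the diagonal-dominance inequality $C_{ij}\ge\max\{C_{ii},C_{jj}\}+h|C_{ii}-C_{jj}|$ gives no purchase on this purely positional obstruction. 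The cascaded shifts you allude to would have to move several transitions simultaneously, each to a position carrying the correct letter, in the correct direction for its own diagonal comparison, without overrunning the next one---this is not ``bookkeeping'' but the heart of the lemma. (Separately, the claim that each $X^{(T_s)}$ already realises $C_{j_{s-1}j_s}$ is not justified and can fail when the nearest same-letter slot lies in the cost-increasing direction; fortunately you do not actually need it, since shifting to a slot with the \emph{same} letter keeps the transition cost unchanged.)

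The paper resolves the collision issue by a different organisation. It inducts on $n$: if some transition already falls in the first (or last) copy of $G$, the flattening forces $\kappa_0$ (resp.\ $\kappa_h$) never to recur beyond position $g$, so the middle segment $G^{n-1}X^{(ng+1)}G^{n-1}$ lives on $n-1$ vertices and the inductive hypothesis delivers $c_{ng}=c_{ng+1}$. In the remaining case all transitions sit strictly inside, and the paper performs a \emph{global} rearrangement rather than a local shift: letting $\kappa_e$ be the vertex of minimal diagonal, it packs the entire letter-word $X^{(j_1)}\cdots X^{(j_s)}$ of the transitions preceding $\kappa_e$ into one contiguous occurrence inside the first $G$, packs the remaining transitions into the last $G$, and fills every intermediate index with $\kappa_e$. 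Because $G$ contains every length-$n$ word and there are at most $n-1$ transitions altogether, both letter-words occur verbatim in $G$; transition costs are preserved exactly, and the stay cost can only drop since $\kappa_e$ has the smallest diagonal. This wholesale relocation is what your one-edge shift is reaching for but cannot deliver without the induction and the packing argument.
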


\begin{proof}
\textit{Step~1.} For $n=1$, the result is trivial; we assume $n>1$ and proceed by induction.
Let a tuple $K=(\kappa_1\ldots\kappa_{h-1})$ provide the minimum for $\beta$. By Lemma~\ref{lemcyc},
we can assume that $\kappa_p=\kappa_q$ implies $\kappa_r=\kappa_p$, provided that $p<r<q$.  Now the
consideration splits into the two cases each of which we treat separately.

\textit{Step~2.} Assume there is $u\leq g$ such that $\kappa_0\neq\kappa_u$
(or, similarly, there is $u\geq h-g$ such that $\kappa_{h}\neq\kappa_u$).
By Step~1, $\kappa_0$ does not occur among $\kappa_v$ with $v\geq g$
(in the latter case, $\kappa_{h}$ does not occur among $\kappa_v$ with $v\leq h-g$, respectively).
Let us set $c_{g}=\kappa_{g}$ and $c_{h-g}=\kappa_{h-g}$.
By induction, we can find indexes $c_{g+1},\ldots,c_{h-g-1}$ which
minimize the expression $X^{(g+1)}_{c_{g},c_{g+1}}+\ldots+X^{(h-g)}_{c_{h-g-1},c_{h-g}}$
and satisfy $c_{ng}=c_{ng+1}$. Now we are done if we change $\kappa_u$ in $K$ with $c_u$, for any $u\in\{g,\ldots,h-g\}$.

\textit{Step~3.} Now we can assume that $\kappa_i=\kappa_0$ if $i\leq g$, and that $\kappa_i=\kappa_{h}$ if $i\geq h-g$.
Denote by $e$ any index for which $A_{\kappa_e\kappa_e}$ is minimal possible; by $\{j_1,\ldots,j_t\}$ denote the set of all indexes $j$
satisfying $\kappa_{j-1}\neq\kappa_{j}$. Up to renumbering, we can assume $j_1<\ldots<j_s\leq e<j_{s+1}<\ldots<j_t$.

By convention, $G$ has a subword $X^{(j_1)}\ldots X^{(j_s)}$, so there are consecutive integers $r+1,\ldots,r+s\in\{1,\ldots,g\}$
satisfying $X^{(j_\sigma)}=X^{(r+\sigma)}$, for any $\sigma\in\{1,\ldots,s\}$. Similarly, there are consecutive integers
$q+s+1,\ldots,q+t\in\{h-g+1,\ldots,h\}$ satisfying $X^{(j_\pi)}=X^{(q+\pi)}$, for any $\pi\in\{s+1,\ldots,t\}$.
Now we set
\noindent (1) $c_i=\kappa_0$ if $i\leq r$,
\noindent (2) $c_i=\kappa_{h}$ if $i>q+t$,
\noindent (3) $c_i=\kappa_e$ if $r+s<i\leq q+s$,
\noindent (4) $c_{r+\sigma}=\kappa_{j_\sigma}$ if $\sigma\in\{1,\ldots,s\}$,
\noindent (5) $c_{q+\pi}=\kappa_{j_\pi}$ if $\pi\in\{s+1,\ldots,t\}$.
It remains to note that $c_{ng}=c_{ng+1}=\kappa_e$ and $X^{(1)}_{c_{0},c_1}+\ldots+X^{(h)}_{c_{h-1},c_{h}}\leq\beta$.
\end{proof}

Let us prove the main result of the section. Assuming that $w_1,\ldots,w_{2^n}$ is a list
of all words over the alphabet $\{A,B\}$ that have length $n$, we denote $\Gamma=w_1\ldots w_{2^n}$.

\begin{thm}\label{lemdiagdom}
If $H\geq n^2\,2^{n+1}+1$, then the identity $\Gamma^n\,A\, \Gamma^n = \Gamma^n\,B\, \Gamma^n$
holds for every pair $A,B$ of diagonally $H$-dominant $n$-by-$n$ tropical matrices.
\end{thm}

\begin{proof}
Assume $G=\Gamma$ and apply Lemma~\ref{lemfordiagdom2}. For any $\kappa_0$, $\kappa_{h}$, the quantities
$\left[\Gamma^n\,A\, \Gamma^n\right]_{\kappa_0\kappa_{h}}$ and
$\left[\Gamma^n\,B\, \Gamma^n\right]_{\kappa_0\kappa_{h}}$ are
equal to the minimum of the expression $\beta$, and, by
Lemma~\ref{lemfordiagdom2}, this minimum does not depend on $X^{(ng+1)}$.
\end{proof}

\section{The main result}

Let us apply the developed technique to construct a
non-trivial identity which holds in the semigroup of
tropical $3$-by-$3$ matrices.


\begin{thm}
Let $A$, $B$ be tropical $3\times 3$ matrices. Let $C_1=A^6B^6$, $C_2=B^6A^6$, $\Gamma(x,y)=w_1\ldots w_{8}$, where
$w_1,\ldots,w_8\in\{x,y\}^*$ are all possible words of length three. Define the words $\U(x,y)=x^2y^4x^2x^2y^2x^2y^4x^2$,
$\V(x,y)=x^2y^4x^2y^2x^2 x^2y^4x^2$, and $$H_i=\Gamma\left(C_1^{146}, C_2^{146}\right)\,\Gamma^3(C_1,C_2)\,C_i\,\Gamma^3 (C_1,C_2)$$ for $i\in\{1,2\}$.
Then $\U\left(H_1,H_1H_2\right)H_1=\V\left(H_1,H_1H_2\right)H_1$.
\end{thm}

\begin{proof}
The main result of~\cite{IzMar} states that $\U=\V$ is an identity in the semigroup of tropical $2$-by-$2$ matrices.
So if $H_1$ and $H_2$ are sign-singular, then the result follows from Lemmas~\ref{lem3x3sin} and~\ref{lem3x3fact}.
Otherwise, $C_1$, $C_2$, and $\Gamma\left(C_1^{146}, C_2^{146}\right)$ are sign-nonsingular by
Corollary~\ref{lemtropnonsincor}. From Theorem~\ref{lemtropnonsin} and Corollary~\ref{corpower} it follows that
$id\in\Sigma(C_1)\cap\Sigma(C_2)$, and Corollary~\ref{lemnowwillbe}
implies $[C_1]_{ii}=[C_2]_{ii}$ for all $i$. Now we use Theorem~\ref{lemdiagdomcha3} to conclude
that $C_1$, $C_2$ are similar to a diagonally $145$-dominant pair. From Theorem~\ref{lemdiagdom}
we deduce $H_1=H_2$, in which case the result follows because $\U(x,x)=\V(x,x)$.
\end{proof}

\section{A comment on the recent paper by Izhakian}

One of the reviewers informed me of the recent paper 'Semigroup identities of tropical matrix semigroups of maximal rank' by Zur Izhakian.\footnote{\textit{Semigroup Forum} 92(3) (2016) 712--732.} I would like to thank the reviewer for doing this, and now I am going to compare the progress achieved in Izhakian's writing with the content of the present paper. An important thing to note is that my paper appeared on arXiv\footnote{Preprint (2014) arXiv:1406.2601v1.} one year before Izhakian's paper was submitted to \textit{Semigroup Forum}. The present version differs from the first arXiv preprint by minor corrections only, so the main results of my paper are anyway original.

As the abstract and introduction of Izhakian's paper suggest, the only substantial result of it is the existence of an identity that holds in any semigroup consisting of $n\times n$ non-singular tropical matrices. This result follows from my technique immediately.

\begin{cor}\label{thrmm}
There is a non-trivial identity that holds in any semigroup consisting of sign-nonsingular tropical $n\times n$ matrices.
\end{cor}

\begin{proof}
Any matrices $X$, $Y$ in this semigroup satisfy $id \in \Sigma (X^{n!}) \cap \Sigma(Y^{n!})$ by Corollary~\ref{corpower}, and the diagonals of $A=X^{n!}Y^{n!}$ and $B=Y^{n!}X^{n!}$ coincide by Corollary~\ref{lemnowwillbe}. For any $h$, the pair $(A,B)$ is similar to a diagonally $h$-dominant pair by Theorem~\ref{lemdiagdomcha3}, so the matrices satisfy the identity as in Theorem~\ref{lemdiagdom}.
\end{proof}

We note in passing that Izahkian proves Corollary~\ref{thrmm} for the \textit{tropical non-singularity} of matrices, which is a more restrictive property than sign non-singularity.~\footnote{Recall that square matrix $A$ is \textit{tropically non-singular} if $\Sigma(A)$ is a singleton set.} This means that Corollary~\ref{thrmm} is strictly stronger than the result of Izhakian's. More than that, Corollary~\ref{thrmm} is implicitly contained in the first arXiv version of my paper, in which I write that Theorems~\ref{lemdiagdomcha3} and~\ref{lemdiagdom} \textit{'reduce the problem of constructing
an identity to sign-singular matrices'}.

The lack of originality in the substantial results of Izhakian's paper is an unpleasant circumstance, but it was even more disappointing for me to learn that the technique used by him looks very similar to my method and gives no improvement to it. In fact, Proposition~2.4 in his paper coincides\footnote{Up to the above mentioned difference between the tropical and sign singularities, which makes my results even stronger than Izhakian's.} with my Theorem~\ref{lemtropnonsin}, his Lemma~2.8 is my Corollary~\ref{corpower}, his Corollary~2.16 is my Corollary~\ref{lemnowwillbe}. These statements form an important part of the argument, and Izhakian does not mention that they appeared earlier in my paper.

Let me stress that Izhakian was aware of my paper because he have cited it. In the only mention of my paper, he writes that the \textit{'semigroup identities of tropical matrices have been ... dealt restrictively'} in my paper. I have no idea what is this supposed to mean, but these are definitely not the words that should be written about a paper that already contains the results a person is trying to prove.

Besides the facts mentioned above, it should be noted that the argument leading Izhakian to his version of Corollary~\ref{thrmm} is not valid. In particular, the first sentence of the 'proof' of Lemma~2.11 says that the author is going to prove that the graph $G_B$ is '1-cyclic reducible' while Definition~2.10 introducing the 1-cyclic reducibility of matrices says that we need to discuss a completely different graph $G_{\langle B\rangle}$ instead. The same ambiguity appears in Theorem~2.22, and its precise meaning (if any) also remains unclear. I do not immediately see how to correct the argument, and I am not sure that arising difficulties are only caused by the complicated notation.
Unfortunately, these issues were left unnoticed by the authors of the recent paper~\cite{DJK}, who give Izhakian's writing as a reference to a version of Corollary~\ref{thrmm} as if it was really his result and as if he had really proved it.

\section{Further work}

Situations like the one described in the above section appear to be very discouraging, but I hope to stay motivated enough to develop the study of this paper. A reasonable objective of further research is to write up the proof of the $n\times n$ version of Conjecture~\ref{conj3x3ident}.

\section{Acknowledgements I}

The task of publishing this paper turned out to be surprisingly hard.  I feel obliged to thank Jean-Eric Pin from \textit{Semigroup Forum}, Akihiro Munemasa from \textit{Journal of Algebraic Combinatorics}, Volodymyr Mazorchuk from \textit{Journal of Algebra}, Volker Mehrmann from \textit{Linear Algebra and its Applications}, and Elena Ivannikova from \textit{Izvestiya Mathematics} for their comments on my paper and encouraging me to keep submitting it to more suitable journals. A special thanks goes to Dijana Ilisevic from \textit{Operators and Matrices}, who informed me of the rejection of the paper because it \textit{'is not on the OaM level'}. I did always know that my paper is good, but I want to express my respect to Dijana for such a realistic assessment of the level of her journal.

\section{Acknowledgements II}

I am sincerely grateful to the editor Ted Dobson and the referees of \textit{Ars Mathematica Contemporanea} for their efforts which 
gave me a hope
to get this paper finally published. The referees' reports and editor's comments were helpful and improved the quality of the paper.

This study (research grant No 14-01-0053) was supported by The National Research University Higher School of Economics' Academic Fund Program in 2014/2015.



\end{document}